\theoremstyle{definition}
\newtheorem*{theorem*}{Theorem}
\newtheorem{theorem}{Theorem}[section]
\newtheorem{definition}{Definition}[section]
\newtheorem{lemma}{Lemma}[section]
\newtheorem{remark}{Remark}[section]
\newtheorem{corollary}{Corollary}[section]
\newtheorem*{ack}{Acknowledgments}
\renewcommand{\theequation}{I.\arabic{equation}}
\long\def\salta#1{\relax}
\def\rife#1{(\ref{#1})} 
\def\eps{\varepsilon}
\def\dys{\displaystyle}
\def\l2h10{L^2 (0,T ; H^1_0 ( \Omega ))}
\def\bc{\begin{cases}} 
\def\ec{\end{cases}} 
\def\be{\begin{equation}} 
\def\ee{\end{equation}}
\def\t12{\mathcal T^{1,2} (\Omega)}
\def\farc{\frac}
\author[L.Gonella]{  Laura Gonella }
  \address{  Departamento de An\'alisis Matem\'atico - Universidad de Granada, Campus Fuentenueva S/N, 18071,
  Granada, Spain} 
\email{lauragonella@ugr.es}
\thanks{Research supported by MICINN Ministerio de Ciencia e Innovaci\'on (Spain) MTM2009-10878 and Junta de Andaluc\'{\i}a FQM-116.}
\date{}
\title[ Periodic Solutions for  
Singular  equations with strong resonant data]{   Existence of Periodic Solutions for some \\
Singular Elliptic equations with\\ strong resonant data}
\subjclass[2000]{34B15, 49J52 \and 49J40}
\keywords{$\Phi$-laplacian, strong resonance condition, periodic solutions }
\begin{document}
\maketitle

\renewcommand{\theequation}{\arabic{section}.\arabic{equation}}

\begin{center}
{\bf\small Abstract}
\end{center}
We prove the existence of at least one $T$-periodic solution $(T>0)$ for differential equations of the form
$$\Bigg(\frac{u'(t)}{\sqrt{1-u'^2(t)}}\Bigg)' =f(u(t))+h(t),\qquad \text{ in } (0,T),$$
where 
$f$ is a continuous function defined on $\mathbb{R}$ that satisfies a {\it strong resonance condition}, $h$  is  continuous and with zero mean value. Our method uses variational techniques for nonsmooth functionals.

\section{Introduction}
In this paper we deal with existence of periodic solutions for a class of equations whose model is 
\begin{eqnarray}\label{primoo}
\Bigg(\frac{u'(t)}{\sqrt{1-u'^2(t)}}\Bigg)'=f(u(t))+h(t), \,\quad  t\in (0,T),
\end{eqnarray}
where $T>0$, $f$ and $h$ are continuous and $\int_0^T h(t)dt=0$ (i.e. $h$ has zero mean value). This equation is known in literature as \emph{relativistic} forced pendulum.\smallskip

We recall that the equation of a   
 {\it classical} forced pendulum provided with periodic boundary conditions can be formulated as 
\begin{eqnarray}\label{classicBC}
\begin{cases}
u''(t)=A\sin\big(u(t)\big)+h(t), \,\quad  t\in (0,T),\\
u(0)-u(T)=0=u'(0)-u'(T),
\end{cases}
\end{eqnarray}
where $A\in\mathbb{R}$ and $h$ is a  continuous function with zero mean value.
{
From the physical point of view, $u(t)$ represents the position of the pendulum and $u''(t)$ its acceleration. The equation in \rife{classicBC} is indeed the expression of the classical Newton's law,
where the external forces are represented by $h(t)$, the temporal-dependent part, and by $A\sin\big(u(t)\big)$, which instead depends on the position. 
}

Problem \rife{classicBC} has been studied, among others, by Hamel (see the pioneering paper \cite{Hamel}), and later 
by Willem (see \cite{will}) and Dancer in \cite{dancer}. These authors proved the existence of at least a solution by minimizing  the energy functional associated to the equation in suitable spaces. Due to the periodicity of this functional, 
once found the minimum $u$, we know infinitely many other minima to exist, which are of the form $u+j\omega$, $\forall j \in \mathbb{Z}$. Later on, 
Mawhin and Willem (see \cite{mawwil}) 
 exploted this property in order to prove the existence of a second geometrically distinct solution (i.e. a solution $v$ of the problem, such that  $v\not\equiv u+j\omega,\,\, j\in\mathbb{Z}$) which is not a minimum anymore, but has a different nature.

Another contributions were given by Thews (\cite{teps}), by Ambrosetti and Coti Zelati in \cite{ambcz}, by Coti Zelati in \cite{cotizel} and by Arcoya  (\cite{arcca}), who studied problem \rife{classicBC} replacing the external force $\sin(u)$ with a continuous function $f(u)$, under an assumption on its behavior at infinity, namely: 
\begin{eqnarray*}
\lim_{\vert s\vert\to+\infty}f(s)=\lim_{\vert s\vert\to+\infty}F(s)=0,
\end{eqnarray*}
where by $F$ we mean the primitive of $f$ defined by $ F(s)=\int_0^s f(\sigma)d\sigma$.
%
In these papers one can found conditions under which the problem is solvable and informations about multiplicity of solutions.\smallskip

The motivation to generalize the classical pendulum equation 
comes from the study of a pendulum moved by the  laws of relativity. It is well known (see for instance \cite{fis}) that the relativistic Newton's  law, obtained by use of Lorentz transfomation $\gamma$, is  
$$
F = m \farc{d }{d t }  \gamma (u' (t)) \,,
\quad \mbox{ 
where }
\quad \gamma (s)  = \frac{s}{\sqrt{1- \frac{s^2}{c^2}} },
$$
$u'(t)$ is the {velocity} and $c$ is the speed of light in vacuum. Thus \rife{primoo} represents the equation of a forced pendulum in the relativistic framework (by considering $m=c=1$). Moreover, due to the nature of the problem, it seems natural to look for periodic solutions, which leads us to the study of
\begin{eqnarray}\label{genI}
\begin{cases}
\Big(\frac{u'(t)}{\sqrt{1-u'^2(t)}}\Big)'=f(u(t))+h(t), \,\quad  t\in (0,T),\\[1.5 ex]
u(0)-u(T)=0=u'(0)-u'(T),\smallskip
\end{cases}
\end{eqnarray}
where $f$ is a  
continuous function with primitive $ F(s)=\int_0^s f(\sigma)d\sigma$.  

The main difficulties in dealing with this problem are due both on the presence of a nonlinearity in the right hand side of the equation and on the singularity of the principal part of the operator, which makes the energy functional nonsmooth. Let us note that, due to the fact that we handle functions with bounded derivative, the natural framework in which to work is the space of Lipschitz functions, with Lipschitz constant less than $1$.
The functional is defined as
$$\dys I(u)=\int_0^T \Big[1-\sqrt{1-u'^2}+F(u)+h(t)u\Big]dt, \quad\text{ if } 
\Vert u'\Vert_\infty\leq1,$$
and as $+\infty$ otherwise. 

The lack of regularity of $I$ makes it necessary to use Szulkin's theory (see \cite{szul}) which allows us to define critical points for such a nonsmooth functional (see Definition \ref{defptocrit}) and which makes the study more complicated (notice that, in contrast with the classical case, a critical point does not satisfy a family of identities, but a family of inequalities). Szulkin's theory provides as well an adapted version of the Palais-Smale condition (see Definition \ref{psseq}) and of the {Mountain-Pass theorem (see Theorem \ref{saddle}).} \medskip

Recently Brezis and Mawhin \cite{due} proved the existence of a solution of \rife{genI}, in the case of $2\pi$-periodic $f$ with zero mean value (or equivalently with a $2\pi$-periodic primitive $F$) by minimizing the energy functional associated to the equation.  
 They used the direct method of calculus of variations to guarantee that a minimum exists, by proving the boundedness from below and the lower semicontinuity of the functional.  They also proved that such a minimum (actually their argument can be extended to any critical point, see Theorem \ref{strongexist}) is a solution of the problem (in sense of Definition \ref{defsol}), so that they obtained the existence of infinitely many solutions (since, as in the classical framework, the energy functional is $2\pi$-periodic).

Bereanu and Torres in \cite{secsol} extend the existence result of Mawhin and Willem \cite{mawwil} for problem \rife{classicBC} to problem \rife{genI}: indeed they prove  the existence of a critical point that is not a translation of the  minimum  found by Brezis and Mawhin. This imply the existence of a second family of critical points that can be either minima or Mountain Pass (in this latter case they use Szulkin's version of Mountain Pass theorem).\smallskip

The aim of this paper is to extend problem \rife{genI} replacing the periodicity of $f$ with a different nonlinearity. 
We consider an hypothesis that is called in literature as {\it strong resonance condition}, i.e. $f(s)$ is a real continuous function with primitive $ F(s)=\int_0^s f(\sigma)d\sigma$ satisfying 
\begin{equation}
\begin{array}{c}\label{newhp}
\displaystyle\lim_{\vert s\vert\to+\infty}f(s)=0,\quad 
\displaystyle\lim_{\vert s\vert\to+\infty}F(s)=\alpha,
\end{array}
\end{equation}
for some $\alpha\in\mathbb{R}$.
{
This assumption allows us to recover some compactness condition for the functional which was guaranteed in case of a periodic $f$,  and which is necessary to prove the existence of a critical point.\medskip

Therefore, 
our main result is the following. \smallskip

{\it
Under hypothesis \rife{newhp}, problem \rife{genI} has at least one solution.}\smallskip

Actually, we prove this existence result for a more general version of problem \rife{genI}, which can be found in Theorem \ref{new}.
In order to prove it, we consider the energy functional associated to the equation of \rife{genI}.{
We first prove that it satisfies the Palais-Smale condition at every level but one 
 and that it achieves its infimum value on a suitable subspace $W$ of its domain. Two cases may occur: either there exists a function in $W$ in which the functional has a value less or equal than the critical level, or the functional, evaluated on $W$, always lies above of it. We are able to prove that in both cases a critical point exists, which can be a minimum (if the first case occurs) or a mountain-pass nature critical point (if the second one stands).  Applying the regularity result in \cite{due}, we prove that such a critical point is a solution of \rife{genI}.  }

Observe that the above proof provides an alternative: the solution may be a minimizer of the functional, or a mountain pass critical point. In order to give additional informations about this uncertainty, we provide sufficient conditions under which it is ensured  the existence of either a minimum  (Corollary \ref{cor1}) or a mountain pass (Corollary \ref{cor2}).
\medskip

\setcounter{equation}{0}

\section{Hypotheses and statement of the results}
Let us consider the following problem:
\begin{equation}\label{simplenew}
\begin{cases}
\displaystyle (\phi(u'(t)))' = f(t,u(t))+h(t),\qquad\qquad\qquad \text{ in } (0,T);\\
u(0)-u(T)=0=u'(0)-u'(T).
\end{cases}
\end{equation}
Let $a>0$ and let us assume the following hypotheses on the functions involved.\smallskip

\begin{itemize}
\item[$(H_\Phi)$] There exists $\Phi:[-a,a] \rightarrow \mathbb{R}$ such that $\Phi\in C[-a,a]\cap C^1(-a,a)$ and $\phi := \Phi':(-a,a) \rightarrow \mathbb{R}$ is an increasing homeomorphism such that $\phi(0)=0$.\\

\item[$(H_f)$]   $f: [0,T] \times \mathbb{R} \to \mathbb{R}$ is a continuous function such that  
$$
\lim_{\vert s\vert\to+\infty} f(t,s)=0 \quad \mbox{uniformly with respect to t}.
$$
$F$ is the primitive of $f$ defined as
\[
F(t,\tau)=\int_0^\tau f(t,s)ds\quad\quad\forall (t,\tau)\in[0,T]\times\mathbb{R}\,,
\]
and there exists a constant $\alpha \in\mathbb{R}$ such that: 
$$
\lim_{\vert s\vert\to+\infty } F(t,s)=\alpha \quad \mbox{uniformly with respect to t}.
$$
\item[$(H_h)$] $h:[0,T]\to\mathbb{R}$ is a continuous function such that
$$
\int_0^T h(t)dt=0.
$$
\end{itemize}

Observe that, by $(H_\Phi)$, without loss of generality we can suppose $\Phi(0)=0$.
Let us note that $(H_f)$ implies that $F$ is derivable with respect to the second variable 
and that both $f$ and $F$ are bounded, that is, there exists $C \in \mathbb{R}$ such that:
\begin{eqnarray}
\vert f(t,\tau)\vert+\vert F(t,\tau)\vert \leq C,\qquad\forall (t,\tau)\in[0,T]\times\mathbb{R}\label{Flim}.
\end{eqnarray}

We need now to make it clear the sense we give to a solution of problem \rife{simplenew}.
\begin{definition}\label{defsol}
A solution of \rife{simplenew} is a function $u\in C^1[0,T]$ with $u(0)=u(T)$, $u'(0)=u'(T)$, such that { $\Vert u' \Vert _\infty < a$} and $\phi\circ u'$ is absolutely continuous, whose derivative satisfies 
\begin{equation*}
(\phi(u'))' = f(t,u)+h(t) \ \ \text{ a.e. } t\in(0,T).
\end{equation*}
\end{definition}
Here we state our main results.
\begin{theorem}\label{new}
\emph{Under hypotheses $(H_\Phi)$, $(H_f)$ and $(H_h)$, there exists at least a solution of Problem \rife{simplenew}. }
\end{theorem}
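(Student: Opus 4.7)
The plan is to associate an energy functional with \rife{simplenew}, verify a Palais--Smale type condition that can fail only at a single, explicitly identified level $c^*$, and then obtain a critical point by a dichotomy---direct minimization on one hand, and Szulkin's nonsmooth mountain pass theorem on the other. The energy is
\[
I(u) = \int_0^T \Phi(u'(t))\,dt + \int_0^T F(t,u(t))\,dt + \int_0^T h(t)u(t)\,dt,
\]
defined on the convex set $K=\{u\in W^{1,\infty}_{\rm per}(0,T):\|u'\|_\infty\le a\}$ and equal to $+\infty$ elsewhere. Since $\phi=\Phi'$ is increasing, $\Phi$ is convex, so $I=\Psi+\mathcal{E}$ splits into the convex lsc part $\Psi(u)=\int_0^T\Phi(u')\,dt$ and the $C^1$ part $\mathcal{E}(u)=\int_0^T[F(t,u)+hu]\,dt$, placing us in Szulkin's framework. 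The regularity result of Brezis--Mawhin \cite{due} (cf.\ Theorem \ref{strongexist}) then guarantees that any Szulkin critical point of $I$ is a solution in the sense of Definition \ref{defsol}.

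The natural splitting is $u=c+v$ with $c\in\mathbb{R}$ and $v$ in $W:=\{v\in K:\int_0^T v\,dt=0\}$. Because $\|v'\|_\infty\le a$ and $\int_0^T v=0$ imply $\|v\|_\infty\le aT$, the set $W$ is uniformly bounded, hence relatively compact in $C[0,T]$. By lower semicontinuity, $I$ attains its infimum on $W$ at some $v_0$, and the convex ``limit functional'' $\tilde J(v):=\int_0^T\Phi(v')\,dt+\int_0^T h(t)v(t)\,dt$ attains its minimum $m$ at some $v^*\in W$. Set
\[
c^*:=\alpha T+m;
\]
by $(H_f)$ and dominated convergence $I(c+v^*)\to c^*$ as $|c|\to\infty$, so $\inf_K I\le c^*$.

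Next I would verify the Palais--Smale condition $(PS)_\beta$ at every level $\beta\ne c^*$. For a PS-sequence $u_n=c_n+v_n$, the $v_n$ are equibounded in $W^{1,\infty}$ and converge uniformly (along a subsequence) to some $v_\infty\in W$. If $\{c_n\}$ is bounded, a standard compactness argument yields strong convergence. If $|c_n|\to\infty$, the strong-resonance conditions $F(t,u_n)\to\alpha$ and $f(t,u_n)\to 0$ uniformly in $t$, together with $\int_0^T h\,dt=0$, allow me to pass the Szulkin variational inequality for $u_n$---tested against competitors of the form $c_n+\tilde w$, $\tilde w\in W$---to the limit: it becomes the minimality condition of $v_\infty$ for $\tilde J$ on $W$, so $\tilde J(v_\infty)=m$ and $I(u_n)\to\alpha T+m=c^*$, contradicting $\beta\ne c^*$.

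The conclusion follows by a dichotomy on the value $I(v_0)$. \emph{If $I(v_0)\le c^*$}, then $\inf_K I\le c^*$: either $\inf_K I<c^*$, in which case Ekeland's variational principle combined with $(PS)$ below $c^*$ produces a global minimizer, or $\inf_K I=c^*$, in which case $v_0$ itself realizes the infimum by lower semicontinuity. \emph{If instead $I(v)>c^*$ for every $v\in W$}, then $W$ is a topological barrier of height strictly greater than $c^*$ separating the two ``half-spaces'' of positive- and negative-mean functions, each of which contains points $\pm n+v^*$ along which $I$ descends to $c^*$; applying Szulkin's mountain pass theorem (Theorem \ref{saddle}) to two such low-lying points on opposite sides of $W$ yields a critical value $\beta\ge\inf_{v\in W}I(v)>c^*$, at which $(PS)_\beta$ furnishes the desired critical point. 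I expect this mountain-pass case to be the principal obstacle: carefully checking the geometric hypotheses---the barrier role of $W$, the valleys approaching the resonant level $c^*$ at $\pm\infty$, and the admissibility of the path space in the convex subset $K$ rather than a linear Banach space---is the subtlest step of the argument.
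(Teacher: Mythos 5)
Your proposal follows essentially the same route as the paper: the same splitting $u=\bar u+\tilde u$, the same identification of the single bad Palais--Smale level $m+\alpha T$ (your $c^*$) by testing the Szulkin inequality against translates of the minimizer of $J$ on the zero-mean subspace, and the same dichotomy between an Ekeland-based minimization and Szulkin's mountain pass with the zero-mean hyperplane as barrier and the translates $\tilde w\pm n$ as low endpoints, followed by the Brezis--Mawhin regularity step. The only (immaterial) difference is that you place the linear term $\int_0^T h u$ in the $C^1$ part rather than in the convex lower semicontinuous part.
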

Let us stress that we are looking for a function whose derivative is bounded. Therefore, the natural variational framework to work in is the space 
$W_{\#}^{1,\infty} (0,T)$. 
This is the Sobolev space of $T$-periodic functions $u$ such that $u$ and its distributional derivative $u'$ are bounded, equipped with the norm
\[
\Vert u \Vert _{W_{\#}^{1,\infty}}=\Vert u \Vert _\infty +\Vert u' \Vert _\infty.
\]

Actually we will look for the solution in a convex subset of $W_{\#}^{1,\infty} (0,T)$, namely
\[
K=\big{\{} v\in W_{\#}^{1,\infty} (0,T) \text{ such that } \Vert v'\Vert_\infty \leq a\big\}.
\]
\begin{remark}
If we find a solution $u$ of \rife{simplenew} in $K$, we can obtain an \emph{a posteriori} information about its regularity. In fact, since $\phi\circ u'$ is absolutely continuous,  in particular
$\phi (u'(t)) \in C[0,T]$, which, using that $\phi$ is an homomorphism, clearly implies $ u'(t) \in C[0,T]$.
 In conclusion, $ u\in C^1[0,T].$
\end{remark}

Following a standard variational procedure, we will associate a suitable functional to problem \rife{simplenew} and prove that it has at least one critical point. Next, we will prove that such a critical point is actually a solution.\smallskip

Let us introduce the energy functional associated to \rife{simplenew}:
\begin{equation*}\displaystyle
I(v):=
\begin{cases}
\dys \int_0^T \Big[\Phi(v')+h(t)v+F(t,v)\Big]dt , \qquad &\mbox{ if } v\in K,\\[1.5 ex]
+\infty, &\mbox{ if } v\in  W_{\#}^{1,\infty} (0,T)\setminus K.
\end{cases}
\end{equation*}

As we already pointed out, $I$ is a nonsmooth functional and it has the structure required by Szulkin's theory (see \cite{szul}), that we briefly recall here.\smallskip\\
The functional $I:W_{\#}^{1,\infty}(0,T)\rightarrow\mathbb{R}\cup\{+\infty\}$ is decomposable as
\[
I=J+\mathcal{F}, 
\]
where 
\begin{equation*}
J(v)=
\begin{cases}
\dys \int_0^T \big[\Phi(v')+h(t)v\big]dt ,  &v\in K,\\
+\infty, & v\in  W_{\#}^{1,\infty}(0,T)\setminus K.
\end{cases}
\displaystyle\text{ and }\ 
\displaystyle\mathcal{F}(v)=\int_0^T F(t,v)dt,
\end{equation*}
for any $v\in W_{\#}^{1,\infty}(0,T)$. Observe that $J$ is convex, proper and lower semicontinuous with respect to the topology of $C[0,T]$ (as it can be seen by using the same argument  as in the proof of Lemma 1 of \cite{due}), and $\mathcal{F}$ is $C^1$ (it is standard to see that it has this required regularity). 

According to Szulkin's theory, we have the following definition of critical point of $I$.
\begin{definition}\label{defptocrit}
A function $u\in W_{\#}^{1,\infty}(0,T)$ is a \emph{critical point} of the functional $I$ if $u\in K$ and it satisfies the inequality
\begin{equation*}
{J}(v)-{J}(u)+\langle \mathcal{F}'(u),v-u\rangle\geq0 \quad\text{ for all } v\in W_{\#}^{1,\infty}(0,T).
\end{equation*}
We say that $c\in \mathbb{R}$ is a \emph{critical value} of $I$ if there exists a critical point $v\in W_{\#}^{1,\infty}(0,T)$ such that $I(v)=c$.
\end{definition}
As we already noticed, dealing with this family of inequalities provides additional defficulties.\smallskip

The main step in proving Theorem \ref{new} is the following result, which ensures the existence of a critical point of the functional $I$.
\begin{theorem}\label{newcp}
\emph{
If assumptions $(H_\Phi)$, $(H_f)$ and $(H_h)$ hold, then there exists at least a critical point for $I$. }
\end{theorem}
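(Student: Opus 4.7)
The plan is to apply Szulkin's nonsmooth critical point theory to the decomposition $I=J+\mathcal{F}$ on the closed convex set $K$, exploiting the strong resonance hypothesis $(H_f)$ to secure a Palais--Smale condition at all but one level and to drive the dichotomy between a global minimum and a mountain-pass critical point.

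First I would analyze arbitrary Szulkin Palais--Smale sequences $\{u_n\}\subset K$ at level $c$. Decompose $W^{1,\infty}_{\#}(0,T)=\mathbb{R}\oplus\widetilde{W}$, where $\widetilde{W}$ denotes the subspace of zero-mean functions, and split $u_n=\bar{u}_n+w_n$ with $w_n\in\widetilde{W}$. The constraint $\Vert w_n'\Vert_\infty\le a$ together with the Poincar\'e-type bound $\Vert w_n\Vert_\infty\le aT$ for zero-mean functions makes $\{w_n\}$ bounded in $W^{1,\infty}_{\#}(0,T)$. Crucially, $(H_h)$ implies $\int_0^T h(t)\bar{u}_n\,dt=0$, so the mean $\bar{u}_n$ drops out of $J(u_n)$. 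If $\{\bar{u}_n\}$ is bounded, Ascoli--Arzel\`a yields a uniformly convergent subsequence and the Szulkin inequality passes to the limit. If $|\bar{u}_n|\to\infty$, then $(H_f)$ gives $f(t,u_n)\to 0$ and $F(t,u_n)\to\alpha$ uniformly in $t$; testing the Szulkin inequality against perturbations in $\widetilde{W}$ forces $w_n$, by strict convexity of $\Phi$, to converge to the unique minimizer $w^\ast$ of $J$ restricted to $\widetilde{W}\cap K$, identifying a single exceptional value $c^\ast:=J(w^\ast)+\alpha T$ at which PS may fail.

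Next, since $\widetilde{W}\cap K$ is convex, closed, and bounded in $W^{1,\infty}_{\#}(0,T)$ (hence precompact in $C[0,T]$) and $I$ is lower semicontinuous for uniform convergence, the direct method of the calculus of variations yields a minimizer of $I$ on this set at level $m:=\inf_{\widetilde{W}\cap K}I$. I then split into two cases. If $m\le c^\ast$, any minimizing sequence for $I$ on $K$ must have bounded means (otherwise Step~1 forces the limit level to be at least $c^\ast\ge m$), so compactness produces a global minimizer of $I$ on $K$, which is a Szulkin critical point by the standard convexity argument applied to $J$. If $m>c^\ast$, I would apply Szulkin's mountain pass theorem (Theorem~\ref{saddle}) with endpoints $w^\ast\pm R$ for $R$ large: the translation invariance of $J$ in the constant direction (again thanks to $(H_h)$) together with $(H_f)$ give $I(w^\ast\pm R)\to J(w^\ast)+\alpha T=c^\ast<m$, while every continuous path in $K$ joining these endpoints meets the slice $\widetilde{W}\cap K$ (apply the intermediate value theorem to the continuous mean functional, noting $\bar{w^\ast}=0$), so the min-max level is at least $m>c^\ast$, at which PS holds and thus produces a critical point.

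The main obstacle is the Palais--Smale analysis when $|\bar{u}_n|\to\infty$: the non-smoothness of $J$ forces one to work exclusively with Szulkin's inequality rather than a gradient, and extracting strong convergence of $w_n$ requires combining strict convexity of $\Phi$ with carefully chosen admissible test directions so that $w_n+\varphi$ still lies in $K$. The mountain-pass construction is also delicate because Szulkin's geometric hypotheses must be verified while keeping all competitors inside the rigid constraint $\Vert v'\Vert_\infty\le a$; using the translated endpoints $w^\ast\pm R$ rather than bare constants $\pm R$ is what makes the path-crossing argument clean.
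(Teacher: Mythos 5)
Your proof follows essentially the same route as the paper's: the same splitting of $W_{\#}^{1,\infty}(0,T)$ into constants plus zero-mean functions, the same identification of the single exceptional Palais--Smale level $c^\ast=\min_{\widetilde W}J+\alpha T$ by testing the Szulkin inequality against the translate $w^\ast+\bar u_n$ and invoking $(H_f)$, the same dichotomy according to whether $\inf_{\widetilde W\cap K}I$ lies below or above $c^\ast$, and the same mountain-pass geometry with translated endpoints $w^\ast\pm R$. One remark on the Palais--Smale step: you do not need (and the paper does not prove) that $w_n$ converges to the unique minimizer $w^\ast$; it suffices that $J(w_n)\to\min_{\widetilde W}J$, which already forces $I(u_n)\to c^\ast$. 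Your stronger claim is justifiable via strict convexity and lower semicontinuity, but it is unnecessary work.

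There is, however, one step that fails as literally stated. In the case $m\le c^\ast$ (with $m=\inf_{\widetilde W\cap K}I$) you assert that \emph{any} minimizing sequence for $I$ on $K$ must have bounded means, ``otherwise the limit level is at least $c^\ast\ge m$.'' This yields no contradiction when $\inf_K I=c^\ast$: in that borderline case a minimizing sequence may perfectly well escape to infinity in the constant direction, and your compactness argument produces nothing. The repair is one line, and it is exactly the paper's Case~1: since $\inf_K I\le m\le c^\ast$, the equality $\inf_K I=c^\ast$ forces $m=\inf_K I$, so the minimizer of $I$ on $\widetilde W\cap K$ that you have already produced by the direct method is a global minimizer. (A related small point: the boundedness of the means of a minimizing sequence when $\inf_K I<c^\ast$ should be argued directly from $I(u_n)\ge \min_{\widetilde W}J+\int_0^T F(t,u_n)\,dt\to c^\ast$ as $|\bar u_n|\to\infty$, rather than by citing your Step~1, which concerns Palais--Smale sequences; this direct estimate in fact lets you bypass the Ekeland argument that the paper uses to convert a minimizing sequence into a $(PS)$ sequence.) With these adjustments your argument is complete and coincides with the paper's.
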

In order to prove it, we need to introduce the notion of Palais-Smale condition at the level $c\in \mathbb{R}$ is in this framework.
\begin{definition}\label{psseq}
For every $c\in\mathbb{R}$, a sequence $\{u_n\}\subset W_{\#}^{1,\infty}(0,T)$ is a \emph{Palais-Smale sequence} at level $c$  (in brief $(PS)_c$-sequence) if
\begin{itemize}
\item$I(u_n)= c+\eps_n$
\item${J}(v)-{J}(u_n)+\langle \mathcal{F}'(u_n),v-u_n\rangle\geq-\varepsilon_n\Vert v-u_n\Vert_{W_{\#}^{1,\infty}} \,,\quad \forall v\in W_{\#}^{1,\infty}(0,T), $
\end{itemize}
where $\varepsilon_n$ tends to $0$ as $n$ diverges.

We say that a functional $I$ satisfies the \emph{ Palais-Smale condition at the level $c$} (in brief $(PS)_c$-condition) if any $(PS)_c$-sequence has a uniformly  convergent subsequence in $[0,T]$.
\end{definition}\smallskip

{
As we already mentioned in the Introduction, we will give further informations about the nature of the solution we prove to exist. 
It will be useful to decompose any  $u\in W_{\#}^{1,\infty}(0,T) $
as
\begin{equation}\label{decomp}
u(t)=\bar{u}+\tilde{u}(t),\, \text{ where }\, \bar{u}=\frac{1}{T}\int_0^T u(t)dt\, \text{ and } \, \int_0^T \tilde{u}(t)dt=0.
\end{equation}
In this way, the entire space can be decomposed as
$$W_{\#}^{1,\infty}(0,T)=V\oplus W,$$
where
$V=\{v\in W_{\#}^{1,\infty}(0,T): v \text{ is constant}\}$ and $W$ is its topological and algebraic complement.\smallskip

We
prove (see Remark \ref{Jbound}), that there exists $\tilde{w}\in W\cap
K$ such that
\[
m=\min_{\tilde{v}\in W}J(\tilde{v})=J(\tilde{w}). 
\]
Notice that, due to the definition of the functional, a minimizer will always belong to $K$.\smallskip

We give here some sufficient conditions to have a solution by minimization or of mountain pass nature. 
\begin{corollary}\label{cor1}
If, in addition to $(H_\Phi)$, $(H_f)$ and $(H_h)$, there exists $ \bar{v}_0\in\mathbb{R} \text{ such that }$
\begin{equation}\label{condmin2}
 \int_0^T F(t,\tilde{w}+\bar{v}_0)dt\leq \alpha T,
\end{equation}
then $I$ has a minimum.
\end{corollary}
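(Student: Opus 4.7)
The plan is to show that $I$ attains its infimum on $W^{1,\infty}_{\#}(0,T)$ by combining the hypothesis (\ref{condmin2}) with the Palais--Smale information established in the proof of Theorem \ref{newcp}: namely, that $I$ satisfies the $(PS)_c$-condition for every $c\in\mathbb{R}$ except at the single resonance level $c^{*}:=m+\alpha T$.

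First I would check that $I$ is bounded below. Writing any $v\in W^{1,\infty}_{\#}(0,T)$ as $v=\bar{v}+\tilde{v}$ according to (\ref{decomp}), one has $v'=\tilde{v}'$ and, using $\int_0^T h(t)\,dt=0$,
\[
J(v)=\int_0^T\Phi(\tilde{v}')\,dt+\int_0^T h(t)\,\tilde{v}(t)\,dt=J(\tilde{v})\ge m,
\]
while (\ref{Flim}) gives $|\mathcal{F}(v)|\le CT$. In particular, for the candidate $v_0:=\tilde{w}+\bar{v}_0$ this computation gives $J(v_0)=J(\tilde{w})=m$, so hypothesis (\ref{condmin2}) reads
\[
I(v_0)=m+\int_0^T F(t,\tilde{w}+\bar{v}_0)\,dt\le m+\alpha T=c^{*}.
\]

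Setting $c:=\inf_{W^{1,\infty}_{\#}(0,T)}I$, we therefore have $c\le c^{*}$, and the argument splits cleanly into two cases. If $c=c^{*}$, then $I(v_0)\le c^{*}=c$ and the reverse inequality $I(v_0)\ge c$ is automatic from the definition of infimum, so $I(v_0)=c$ and $v_0$ itself is a minimizer. If instead $c<c^{*}$, then Ekeland's variational principle adapted to the Szulkin setting (cf.\ \cite{szul}) produces a minimizing sequence $\{u_n\}$ that is simultaneously a $(PS)_c$-sequence in the sense of Definition \ref{psseq}. Since $c\ne c^{*}$, the $(PS)_c$-condition applies and yields a subsequence converging uniformly on $[0,T]$ to some $u^{*}\in C[0,T]$. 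Lower semicontinuity of $I$ with respect to the $C[0,T]$ topology (the piece $J$ is convex and l.s.c.\ by the argument of \cite{due}, and $\mathcal{F}$ is continuous) then gives $I(u^{*})\le\liminf_{n\to\infty}I(u_n)=c$, and hence $I(u^{*})=c$.

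The main obstacle is a bookkeeping one rather than a conceptual one: verifying that the minimizing sequence produced by Ekeland genuinely satisfies the inequality formulation of Definition \ref{psseq} (this is where Szulkin's abstract machinery does the work), and checking that the uniform limit $u^{*}$ actually lies in $K$ so that $I(u^{*})<+\infty$. The latter is automatic from the l.s.c.\ step, since otherwise $J(u^{*})=+\infty$ while $\mathcal{F}$ is bounded, contradicting $I(u^{*})\le c$.
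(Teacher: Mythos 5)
Your proposal is correct and follows essentially the same route as the paper: the key observation in both is that $J(\tilde{w}+\bar{v}_0)=J(\tilde{w})=m$ (since $h$ has zero mean), so hypothesis \rife{condmin2} forces $\inf I\leq m+\alpha T$, after which the dichotomy of Step~2 of the proof of Theorem~\ref{newcp} (either the infimum equals $m+\alpha T$ and is attained at an explicit point, or it is strictly below and Ekeland plus the $(PS)_c$-condition for $c\neq m+\alpha T$ produces a minimizer) applies verbatim. The only cosmetic difference is that you exhibit $\tilde{w}+\bar{v}_0$ itself as the minimizer in the borderline case, while the paper uses the minimizer of $I$ over $W$; both work.
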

\begin{remark} If $sf(t,s)\geq0$ for all $(t,s)\in[0,T]\times \mathbb{R}$, then $F(t,s)\leq \alpha$ for all $(t,s)\in[0,T]\times \mathbb{R}$, hypothesis \rife{condmin2} is satisfied and the functional $I$ attains a minimum.
\end{remark}
{ Let us call 
$$\dys F_0=\inf_{(t,s)\in[0,T]\times \mathbb{R}}F(t,s),$$ and let us note that, by condition \rife{Flim}, it is finite. 
\begin{corollary}\label{cor2}
If, in addition to $(H_\Phi)$, $(H_f)$ and $(H_h)$, we assume that there exists a positive constant $k$ such that $\Phi(s)\geq ks^2$ for every $s\in[-a,a]$; and it holds
\begin{equation}\label{condpm2}
F_0T-\frac{T\Vert h\Vert^2_\infty }{4k}>m+\alpha T, 
\end{equation}
then $I$ has a critical point of mountain-pass nature.
\end{corollary}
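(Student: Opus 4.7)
The plan is to apply Szulkin's Mountain Pass theorem (Theorem \ref{saddle}), producing a critical value strictly above the resonant level $m+\alpha T$; by the analysis of the compactness defect carried out in the proof of Theorem \ref{newcp}, this is the only level at which the $(PS)_c$ condition may fail.

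As the two ``low points'' I would take $v_M^{\pm}:=\tilde{w}\pm M$ for a large real $M>0$. Since $\int_0^T h\,dt=0$, translation by a constant leaves $J$ unchanged, so $J(v_M^\pm)=J(\tilde{w})=m$; moreover $\tilde{w}\in W\cap K$ is uniformly bounded ($\Vert\tilde{w}\Vert_\infty\leq aT/2$, from the zero-mean property combined with $\Vert\tilde{w}'\Vert_\infty\leq a$). Combining this with $(H_f)$ gives $F(t,\tilde{w}(t)\pm M)\to\alpha$ uniformly in $t$, whence
\[
I(v_M^\pm)=m+\int_0^T F(t,\tilde{w}(t)\pm M)\,dt\;\longrightarrow\;m+\alpha T,\qquad M\to+\infty.
\]

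For the barrier I would exploit the quadratic lower bound $\Phi(s)\geq ks^2$. An integration by parts (using the $T$-periodic primitive $H$ of $h$, which is well-defined since $\int_0^T h\,dt=0$) rewrites $\int_0^T hu\,dt=-\int_0^T Hu'\,dt$, and Young's inequality $|Hs|\leq ks^2+H^2/(4k)$ combined with the appropriate Poincaré-type control of $\Vert H\Vert_\infty$ by $\Vert h\Vert_\infty$ yields
\[
\int_0^T\bigl[\Phi(u')+hu\bigr]\,dt\;\geq\;-\frac{T\Vert h\Vert_\infty^2}{4k}
\]
on the separating set. Together with $\int_0^T F(t,u)\,dt\geq F_0 T$, this gives $I(u)\geq F_0T-T\Vert h\Vert_\infty^2/(4k)>m+\alpha T$ there, the strict inequality being precisely hypothesis \rife{condpm2}. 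Choosing $M$ large enough that $I(v_M^\pm)<F_0T-T\Vert h\Vert_\infty^2/(4k)$ (possible since $I(v_M^\pm)\to m+\alpha T$), one arranges the separating set so as to disconnect $v_M^+$ from $v_M^-$ in $W_{\#}^{1,\infty}(0,T)$.

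Szulkin's Mountain Pass theorem then delivers a critical value $c\geq F_0T-T\Vert h\Vert_\infty^2/(4k)>m+\alpha T$. Since $c\neq m+\alpha T$ the $(PS)_c$ condition holds, and a critical point $u^*$ exists with $I(u^*)=c>m+\alpha T\geq I(v_M^\pm)$; in particular $u^*$ is not the minimum (or limit of minima) approached by $v_M^\pm$, so it is of mountain-pass nature. The delicate step is the precise identification of the separating set together with the coercivity constant $F_0T-T\Vert h\Vert_\infty^2/(4k)$: one must carefully combine Young's inequality applied to $\int hu$, the control of the primitive $H$ of $h$, and the quadratic bound $\Phi\geq ks^2$, then verify that the resulting set genuinely disconnects the two valleys in the variational space.
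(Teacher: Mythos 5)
Your overall strategy matches the paper's: show that condition \rife{condpm} holds, i.e.\ $\inf_W I>m+\alpha T$, and then invoke Step 3 of the proof of Theorem \ref{newcp}, taking as endpoints $\tilde w\pm M$ with $I(\tilde w\pm M)\to m+\alpha T$ and $W$ (the codimension-one subspace of zero-mean functions, which separates $\tilde w-M$ from $\tilde w+M$ because their means have opposite signs) as the linking set. The endpoint part of your argument is fine. The gap is in the barrier estimate, which is precisely the step you flag as ``delicate'' and do not carry out. With your integration by parts, $\int_0^T hu\,dt=-\int_0^T Hu'\,dt$ where $H(t)=\int_0^t h(s)\,ds$, Young's inequality gives
\[
\int_0^T\bigl[\Phi(u')+hu\bigr]dt\;\geq\;k\Vert u'\Vert_{L^2}^2-k\Vert u'\Vert_{L^2}^2-\frac{1}{4k}\int_0^T H^2\,dt\;=\;-\frac{\Vert H\Vert_{L^2}^2}{4k},
\]
and there is no ``Poincar\'e-type control'' bounding $\Vert H\Vert_{L^2}^2$ by $T\Vert h\Vert_\infty^2$: the only general estimate is $\Vert H\Vert_\infty\leq\Vert h\Vert_{L^1}\leq T\Vert h\Vert_\infty$, which yields $\Vert H\Vert_{L^2}^2\leq T^3\Vert h\Vert_\infty^2$, and this order is attained (take $h=\Vert h\Vert_\infty$ on $(0,T/2)$ and $-\Vert h\Vert_\infty$ on $(T/2,T)$, for which $\Vert H\Vert_{L^2}^2\sim T^3\Vert h\Vert_\infty^2/12$). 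Replacing $H$ by $H+c$ for the best constant $c$ does not change the power of $T$. So your route produces a lower bound of the form $F_0T-CT^3\Vert h\Vert_\infty^2/(4k)$, which is strictly weaker than what hypothesis \rife{condpm2} allows you to use.

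The computation that actually delivers the constant in \rife{condpm2} works directly on $W$, without integrating by parts: for $\tilde v\in W$ one has $\int_0^T h\tilde v\,dt\geq-\Vert h\Vert_\infty\int_0^T|\tilde v|\,dt\geq-\sqrt{T}\,\Vert h\Vert_\infty\Vert\tilde v\Vert_{L^2}\geq-\sqrt{T}\,\Vert h\Vert_\infty\Vert\tilde v'\Vert_{L^2}$ by H\"older and the Poincar\'e--Wirtinger inequality (here the zero-mean condition is essential). Hence $I(\tilde v)\geq p(s)$ with $s=\Vert\tilde v'\Vert_{L^2}$ and $p(s)=ks^2-\sqrt{T}\Vert h\Vert_\infty s+F_0T$, and minimizing this quadratic gives exactly $\inf_W I\geq F_0T-T\Vert h\Vert_\infty^2/(4k)>m+\alpha T$. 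Once this is established, the rest of your argument (Szulkin's Mountain Pass theorem with $c\geq\inf_W I>m+\alpha T$, so that $(PS)_c$ holds by Step 1) is correct and is what the paper does. As written, however, your proof is missing the one estimate the corollary actually turns on.
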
}
\begin{remark}
Observe that in the case of the relativistic model problem \rife{primoo} we have $\Phi(s)=1-\sqrt{1-s^2}, \ s\in[-1,1]$, which satisfies $(H_\Phi)$ and  $\Phi(s)\geq \frac{1}{4}s^2$.

\end{remark}
\medskip
}

\setcounter{equation}{0}
\section{Proof of the result}
Let us first recall an adapted version to Szulkin's theory of the Mountain-Pass
theorem, which we will use later. 

\begin{theorem}[Mountain Pass theorem]\label{saddle}
\emph{
Let $X$ be a Banach space and $I:X\rightarrow\mathbb{R}\cup\{+\infty\}$ be a functional decomposable as $I=J+\mathcal{F}$, with $J$ proper, convex and lower semicontinuous and $\mathcal{F}\in C^1$. 
Let $W$ be a subspace of $X$ of codimension $1$ and suppose that there exist $u_1,u_2$ belonging to different connected components of $X\setminus W$ (notice that there are only two of them)  such that
\[
I(u_i)<\inf_{W}I,\quad i=1,2.
\] 
If $c$ is given by
\[
c=\inf_{\gamma\in\Gamma}\max_{x\in[0,1]} I(\gamma(x)),
\]
where $\Gamma=\{\gamma\in C([0,1],X): \gamma(0)=u_1, \ \gamma(1)=u_2\}$
and we assume that $(PS)_c$ holds, then $c$ is a critical value of $I$ (greater or equal than $\dys\inf_{W}I$).
}
\end{theorem}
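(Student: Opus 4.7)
The proof follows the classical Mountain Pass scheme adapted to the nonsmooth setting of Szulkin. First I would establish the geometric inequality $c \geq \inf_W I$: since $u_1$ and $u_2$ lie in different connected components of $X \setminus W$ and $W$ has codimension $1$, any continuous path $\gamma \in \Gamma$ must cross $W$, so there exists $x^\ast \in [0,1]$ with $\gamma(x^\ast) \in W$. Hence $\max_{x \in [0,1]} I(\gamma(x)) \geq I(\gamma(x^\ast)) \geq \inf_W I$, and taking the infimum over $\Gamma$ gives $c \geq \inf_W I$. In particular, since $I(u_i) < \inf_W I \leq c$ for $i=1,2$, the minimax level is strictly above the endpoint values.

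The main step will be to show that $c$ is a critical value, which I would prove by contradiction. Assume that $c$ is not a critical value of $I$. Combining the $(PS)_c$-condition with the structural hypotheses ($J$ proper convex and lsc, $\mathcal{F} \in C^1$), I would invoke Szulkin's deformation lemma (cf. \cite{szul}) to produce a number $\varepsilon_0 \in (0,\, c - \max\{I(u_1),I(u_2)\})$ and a continuous map $\eta \colon [0,1] \times X \to X$ such that, writing $I^s := \{v \in X : I(v) \leq s\}$, one has $\eta(0,\cdot) = \mathrm{id}_X$, $\eta(t,u)=u$ whenever $|I(u)-c| \geq 2\varepsilon_0$, the function $t \mapsto I(\eta(t,u))$ is nonincreasing, and $\eta(1, I^{c+\varepsilon_0}) \subset I^{c-\varepsilon_0}$.

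With this deformation in hand, I would choose $\gamma \in \Gamma$ with $\max_{x \in [0,1]} I(\gamma(x)) \leq c + \varepsilon_0$, which exists by the definition of $c$ as an infimum. The choice of $\varepsilon_0$ guarantees $I(u_i) < c - \varepsilon_0$, hence $\eta(1,u_i) = u_i$, and so $\widetilde{\gamma}(x) := \eta(1, \gamma(x))$ belongs again to $\Gamma$. But then $\max_{x \in [0,1]} I(\widetilde{\gamma}(x)) \leq c - \varepsilon_0$, contradicting the definition of $c$ and thereby forcing $c$ to be a critical value.

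The only real obstacle is the construction of the deformation $\eta$, which does not follow directly from the smooth pseudo-gradient machinery: in this nonsmooth framework the usual descent vector field must be replaced by one compatible with the inequality-based critical point condition of Definition \ref{defptocrit}. At each non-critical point $u$ one uses the failure of that inequality to extract some $v \in X$ with $J(v)-J(u)+\langle \mathcal{F}'(u), v-u\rangle < -\delta\|v-u\|$ for a locally uniform $\delta > 0$; these local descent directions are then patched by a locally finite partition of unity, the convexity of $J$ being essential to ensure that the resulting flow genuinely decreases $I$ along its orbits. This is exactly the content of Szulkin's deformation lemma in \cite{szul}, which I would quote rather than reprove.
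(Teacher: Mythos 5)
Your argument is correct and is essentially the paper's: the paper disposes of this theorem with a one-line citation to Szulkin's Theorem 3.4, and what you have written is precisely the standard unpacking of that citation, including the crossing argument $c\geq\inf_W I$ (which is exactly the ``slight variant'' the paper alludes to, the sphere in Szulkin's geometry being replaced by the closed codimension-one subspace $W$) and the contradiction via Szulkin's deformation lemma. The only nit is bookkeeping: since the deformation fixes points only where $|I(u)-c|\geq 2\varepsilon_0$, you should take $\varepsilon_0<\tfrac{1}{2}\bigl(c-\max\{I(u_1),I(u_2)\}\bigr)$ so that $\eta(1,u_i)=u_i$ and $\widetilde{\gamma}\in\Gamma$.
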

\begin{proof}
This is a slight variant of Szulkin's version of Mountain Pass theorem (see \cite
[Theorem 3.4]{szul}).
\end{proof}
We now prove that both $I$ and $J$ achieve their infimum value on $W$.
%
%
%
\begin{lemma}\label{minW}
The functional $I$ achieves its infimum value over the space $W$.
\end{lemma}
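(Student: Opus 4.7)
The plan is to apply the direct method of the calculus of variations, essentially adapting the argument for $J$ referenced in Remark \ref{Jbound}.

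First I would observe that since $I\equiv+\infty$ on $W_\#^{1,\infty}(0,T)\setminus K$, minimizing $I$ over $W$ is the same as minimizing over $W\cap K$. For any $v\in W\cap K$, the fact that $v$ has zero mean and is continuous $T$-periodic forces $v$ to vanish at some $t_0\in[0,T]$, so that $\|v\|_\infty\le \int_0^T|v'(t)|\,dt \le aT$. Combined with the bound $\|v'\|_\infty\le a$, this shows that $W\cap K$ is a bounded subset of $W_\#^{1,\infty}(0,T)$. Using that $\Phi$ is continuous on $[-a,a]$ (hence bounded) and that $|F|\le C$ by \eqref{Flim}, one immediately deduces that $I$ is bounded on $W\cap K$, so $m_I:=\inf_W I\in\mathbb{R}$.

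Next I would pick a minimizing sequence $\{v_n\}\subset W\cap K$, i.e.\ $I(v_n)\to m_I$. Since $\|v_n\|_\infty\le aT$ and $\{v_n\}$ is equi-Lipschitz with constant $a$, Arzelà--Ascoli yields a subsequence (not relabeled) converging uniformly on $[0,T]$ to some $v\in C[0,T]$. Passing to a further subsequence we may assume $v_n'\rightharpoonup^{*} v'$ in $L^\infty(0,T)$. The uniform convergence and zero-mean property pass to the limit, so $\bar v=0$, i.e.\ $v\in W$; weak-$*$ lower semicontinuity of the $L^\infty$-norm gives $\|v'\|_\infty\le\liminf\|v_n'\|_\infty\le a$, so $v\in K$.

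It remains to pass to the limit in $I=J+\mathcal F$. The paper already states that $J$ is convex, proper and lower semicontinuous on $C[0,T]$ (by the same argument as Lemma 1 of \cite{due}), so $J(v)\le\liminf_n J(v_n)$. For the $C^1$ perturbation, since $F$ is continuous in $s$ and $|F|\le C$, dominated convergence combined with the uniform convergence $v_n\to v$ yields $\mathcal F(v_n)\to\mathcal F(v)$. Adding these gives $I(v)\le\liminf_n I(v_n)=m_I$, so $v$ is the desired minimizer.

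The only delicate point is verifying $v\in K$, which is where the weak-$*$ compactness of $\{v_n'\}$ in $L^\infty$ (equivalently, the convexity of the constraint $\|v'\|_\infty\le a$) is used; the lower semicontinuity of $J$ on $C[0,T]$ is then enough to conclude without needing to analyze the $\Phi(v')$ integrand by hand.
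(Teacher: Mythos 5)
Your proof is correct and follows essentially the same route as the paper: the direct method, with the bound $\|v\|_\infty\le aT$ from the zero-mean condition, Arzel\`a--Ascoli on a minimizing sequence, and lower semicontinuity of $I$ under uniform convergence. Your justification that the limit stays in $K$ (via weak-$*$ compactness of the derivatives, or equivalently preservation of the Lipschitz constant $a$ in the uniform limit) and your splitting of the semicontinuity into $J$ lower semicontinuous plus $\mathcal F$ continuous are slightly more detailed than the paper's terser assertions, but the argument is the same.
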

{\begin{proof}
Let us first of all observe that $I$ is bounded from below. Because of the definition of $I$, we only need to prove that it holds $I(v)\geq C\in \mathbb{R}$ when $v\in K$. In this case, it is easy to see that $\Vert \tilde{v}\Vert_\infty\leq aT$, using the mean value theorem and the fact that $\tilde{v}$ has zero mean value. Moreover, since $\Phi$ achieves its minimum at zero and $\Phi(0)=0$, $F$ is bounded and $h$ has zero mean value, it stands
$$\dys I(v)=\int_0^T [\Phi(\tilde{v}')+h(t)\tilde{v}+F(v)]dt\geq\dys \Big[-a\Vert h\Vert_{L^1}+\inf_{[0,T]\times\mathbb{R}}F(t,s)\Big]T>-\infty.$$

Our aim is to prove that if we call $\dys\beta=\inf_{\tilde{v}\in W}I(\tilde{v})$, then there exists $\tilde{z}\in W$ such that  $I(\tilde{z})= \beta$.

Let us consider a minimizing sequence $\{\tilde{z}_n\}\subset W\cap K$, i.e. satisfying that $\displaystyle\lim_{n\to\infty}I(\tilde{z}_n)=\beta$, and let us show that it is bouded. 
As we already noticed, since any $\tilde{z}_n$ belongs to $K$, we have that $\Vert\tilde{z}_n\Vert_\infty\leq aT$ and that $\Vert\tilde{z}'_n\Vert_\infty\leq a$. Thus:
\[
\Vert \tilde{z}_n\Vert_{W_{\#}^{1,\infty}}=\Vert \tilde{z}_n\Vert_\infty+\Vert \tilde{z}'_n\Vert_\infty\leq (T+1)a.
\]
Applying now Ascoli-Arzel\`a theorem, we get the existence of a function $\tilde{z}\in C[0,T]$, such that up to a subsequence 
\[
\tilde{z}_n\to\tilde{z} \text{ in } C[0,T].
\]
Since $\tilde{z}_n\in W\cap K$, the uniform convergence  gives that $\tilde{z}\in W\cap K $. 
 By the lower semicontinuity of $I$, we get
\[
\beta=\liminf_{n\to\infty}I(\tilde{z}_n)\geq I(\tilde{z}).
\]
It is straightforward $I(\tilde{z})=\beta$.
\end{proof}
}
\begin{remark}\label{Jbound}This lemma, with the particular choice $F\equiv 0$, shows that the same result holds for the functional $J$. In particular, 
\begin{equation}\label{JboundJ}
\exists \tilde{w}\in W\ \text{ such that } J(\tilde{w})= m=\min_{W}J.\bigskip
\end{equation}
\end{remark}

\begin{proof}[Proof of Theorem \ref{newcp}] 


\noindent {\bf Step 1. $I$ satisfies the $(PS)_c$-condition, for every level $c\neq m+\alpha T$.} 

Let us consider a $(PS)_c$-sequence in $K$. According to Definition \ref{psseq}, the following two conditions hold true:
\begin{enumerate}
\item[(PS1)] \label{cond1}$\displaystyle 
\int_0^T \Big[\Phi(u'_n)+h(t)u_n+F(t,u_n)\Big]dt = c + \eps_n;
$
\item[(PS2)] $ \displaystyle \int_0^T [\Phi(v')-\Phi(u_n')]dt+\int_0^T\big[f(t,u_n)+h(t)\big](v-u_n)dt\geq-\varepsilon_n\Vert v-u_n\Vert_{W_{\#}^{1,\infty}} \,, \, \,$\\ $\forall v\in W_{\#}^{1,\infty}(0,T),
$  \label{cond2}
\end{enumerate}
where $\eps_n$ is a sequence converging to $0$.

We are interested in proving that $\{u_n\}$ has a uniformly convergent
subsequence. Let us note that we only need to show that $\{u_n\}$ is bounded in
$K$. Indeed by  Ascoli-Arzel\`a theorem, we can extract  a subsequence (not
relabeled) $\{u_{n}\}$ which converges uniformly in $[0,T]$ to a $u\in C[0,T]$. 
Moreover, since $\{u_n\}\subset K$ and thanks to the  uniform convergence, then $u\in K$.\smallskip

Let us decompose any $u_n=\tilde{u}_n+\bar{u}_n$ as in \rife{decomp}. We already
know that $\{\tilde{u}_n\}$ is bounded  in $K$ (see proof of Lemma \ref{minW}),
so that it only remains to show that $\{\bar{u}_n\}$ is bounded (notice
that it is a sequence of real numbers).

Let us suppose, by contradiction, that $\bar{u}_n$ diverges.
Let $\tilde{w}$ be the minimizer of $J$ on $W$ given by \rife{JboundJ}
and let us choose $v=\tilde{w}+\bar{u}_n$ in (PS2). So we obtain
\[
\displaystyle \int_0^T \Big[\Phi(\tilde{u}_n')-\Phi(\tilde{w}')+h(t)\tilde{u}_n-h(t)\tilde{w}\Big]dt-\int_0^Tf(t,\tilde{u}_n+\bar{u}_n)(\tilde{w}-\tilde{u}_n)dt\leq\varepsilon_n\Vert \tilde{w}-\tilde{u}_n\Vert_{W_{\#}^{1,\infty}} .
\]
Thus, taking in account the definition of $J$, recalling that $\eps_n$ vanishes as $n$ diverges and being $\Vert \tilde{w}-\tilde{u}_n\Vert_{W_{\#}^{1,\infty}} $ bounded, we deduce that (up to subsequences, not relabeled)
\[
 \limsup_{n\to\infty}\big[J(\tilde{u}_n)-J(\tilde{w})\big]\leq   \lim_{n\to\infty}\int_0^T f(t, \tilde{u}_n+\bar{u}_n )(\tilde{w}-\tilde{u}_n)dt.
\]
Thanks to \rife{Flim} we can use Lebesgue Theorem, and by $(H_f)$ the right
hand side above tends to zero, so that:
\begin{equation*}
 \limsup_{n\to\infty}\big[J(\tilde{u}_n)-J(\tilde{w})\big]\leq 0\,.
\end{equation*}
Consequently, since  $\dys J(\tilde{u}_n)\geq J(\tilde{w})=\min_{\tilde{v}\in W}J(\tilde{v})$ for all $n$, we get
\begin{equation*}
  \lim_{n\to\infty}J(\tilde{u}_n) =J(\tilde{w})  = m\,.
\end{equation*}
From this equality, thanks again to \rife{Flim}, using $(H_f)$ and up to subsequences, 
we get
\begin{eqnarray*}
\lim_{n\to\infty} I(u_n)=       \lim_{n\to \infty} J (u_n) + \int_0^T F (\tilde{u} _n+ \bar{ u}_n) = m+\alpha T.
\end{eqnarray*}
On the other hand, by condition (PS1), we know that $\dys \lim_{n\to\infty} I(u_n)=c$.

Hence, the only constant $c$ which allows the sequence $\bar{u}_n$ to be
unbounded is $c= m+\alpha T$. 
In conclusion, we have proved that the $(PS)_c$-condition holds for every $c\neq m+\alpha T$.\medskip


Let us now consider the functional $I$ over the subspace $W$. Thanks to Lemma \ref{minW}, there exists $\tilde{z}\in W$ such that $\dys I(\tilde{z})=\min_{\tilde{v}\in W} I(\tilde{v})$: there are now two possibilities. Either
\begin{equation}\label{condmin}
I(\tilde{z})\leq m+\alpha T,
\end{equation}
or
\begin{equation}\label{condpm}
I(\tilde{z})> m+\alpha T.
\end{equation}\bigskip
{
\noindent{\bf Step 2. If \rife{condmin} holds,  then $I$ has a minimum.}}

Condition \rife{condmin} implies that $\dys\gamma=\inf_{v\in K} I(v)\leq I(\tilde{z}) \leq m+\alpha T$. We prove that if $\gamma\leq m+\alpha T$ then the infimum of $I$ in $K$ is attained. 

Indeed, two cases may occur

 {\it (Case 1.)}  If $\displaystyle\gamma=m+\alpha T$, it means that $I(\tilde{z})=m+\alpha T$. Hence 
 $\tilde{z}$ is a minimizer \\
 \indent\quad\quad\quad\quad of $I$ in  $K$ and we are done.  \smallskip
 
 {\it (Case 2.)}
 If $\displaystyle\gamma<m+\alpha T$, we know by Step 1 that $I$ satisfies the $(PS)_\gamma$-condition. Our first aim is 
to take a minimizing sequence $\{w_n\}\subset K$ and (via the use of Ekeland's
variational principle in \cite[Theorem 1]{ekel}) to construct from
$\{w_n\}$ a new sequence $\{u_n\}$ which is still
minimizing and that is also a $(PS)_\gamma$-sequence. 
By definition, it holds 
\[
I(w_n)=\gamma+\frac{1}{n}.
\]
{We apply Ekeland's principle 
(see \cite[Proposition 1.6]{szul} with the choice  $\delta=\frac{1}{n}, \lambda=1$) 
and we get the existence of  a sequence $\{u_n\}$ satisfying }
\begin{itemize}
\item $ \displaystyle \gamma\leq I(u_n)\leq I(w_n)= \gamma+\frac{1}{n},$
\item $\displaystyle I(z)-I(u_n)\geq-\frac{1}{n}\Vert z-u_n\Vert_{W_{\#}^{1,\infty}}, \qquad\forall z\in W_{\#}^{1,\infty}(0,T). $
\end{itemize}
Let us note that the first inequality implies (PS1) and  that $\{u_n\}$ is still a minimizing sequence. Let us now work on the second one to show that it leads to (PS2). \\
For any $v\in W_{\#}^{1,\infty}(0,T)$ and any $n\in\mathbb{N}$, we set $z=(1-\tau)u_n+\tau v, \, \tau \in(0,1)$, so that 
\[ 
I\big((1-\tau)u_n+\tau v\big)-I(u_n)\geq-\frac{1}{n}\tau \Vert v-u_n\Vert_{W_{\#}^{1,\infty}}.
\] 
On the other hand, using the convexity of $\Phi$, we get
$$\begin{array}{c}
\displaystyle I\big((1-\tau)u_n+\tau v\big)-I(u_n)\smallskip\\
\displaystyle \leq \tau \int_0^T \big[\Phi(v')-\Phi(u'_n)+h(t)(v-u_n)\big]dt+\int_0^T\big[F\big(t,(u_n+\tau(v-u_n)\big)-F(t,u_n)\big]dt,
\end{array}$$
which yields
\[
\begin{array}{c}
\dys\tau \int_0^T \big[\Phi(v')-\Phi(u'_n)+h(t)(v-u_n)\big]dt+\int_0^T\big[F\big(t,(u_n+\tau (v-u_n)\big)-F(t,u_n)\big]dt\smallskip\\
\dys\geq-\frac{\tau}{n}\Vert v-u_n\Vert_{W_{\#}^{1,\infty}}\,.
\end{array}
\]
 Since $F$ is the bounded primitive of $f$ and we can use the mean value theorem, dividing by $\tau$ and letting $\tau \to0$, we get
\begin{eqnarray*}
\int_0^T\big[\Phi(v')-\Phi(u'_n)+ [f(t,u_n)+h(t)](v-u_n)\big]dt\geq-\frac{1}{n}\Vert v-u_n\Vert_{W_{\#}^{1,\infty}},\,\,\\
\forall v\in W_{\#}^{1,\infty}(0,T)\,,
\end{eqnarray*}
i.e. condition (PS2). 
We have now found that $\{u_n\}$ is a minimizing sequence satisfying the $(PS)_\gamma$-condition with $\gamma<m+\alpha T$, so that, by Step 1, it is possible to extract a uniformly convergent subsequence (not relabeled) such that
\[
u_n\to u \text{\, uniformly on $[0,T]$,} \quad  u\in K.
\]
Using again the lower semicontinuity of $I$ and  $I(u)>-\infty$, we get that
\begin{eqnarray*}
\gamma=\liminf_{n\to\infty}I(u_n)\geq I(u).
\end{eqnarray*}
We have proved eventually that
$I(u)=\gamma$ and that $u$ is minimizer of $I$.\medskip

\noindent{\bf Step 3. If \rife{condpm} holds, then $I$ has a mountain pass.}\smallskip

We want now to apply Theorem \ref{saddle}.\smallskip 



Let us recall that $\tilde{w}$ is the function such that 
$J(\tilde{w})=m$ and, by considering $n\in\mathbb{R}$ and since $h$ has zero mean value, we get
$$
I(\tilde{w}\pm n)=\int_0^T [\Phi(\tilde{w}')+h(t)\tilde{w}]dt +\int_0^T F (t,\tilde{w}\pm n) dt=m+\int_0^T F (t,\tilde{w}\pm n) dt.
$$
Using now $(H_f)$, we deduce that both
\[
\lim_{n\to+\infty}I(\tilde{w}+n)= m+\alpha T\quad \text{and}\quad\lim_{n\to+\infty}I(\tilde{w}-n)= m+\alpha T. 
\]
That is, $\forall \eps>0,$ there exists $n_0=n_0(\eps)\in\mathbb{N}$, such that 
\[
\Big\vert I(\tilde{w}+n)-\big(m+\alpha T\big) \Big\vert<\eps \quad  \text{ and } \quad  \Big\vert I(\tilde{w}-n)-\big(m+\alpha T\big)\Big\vert<\eps, \quad \forall n\geq n_0\,.
\]
 Hence, if we fix any
\[
0<\eps_1\leq\frac{\beta-\big(m+\alpha T\big)}{2},
\]
then we  find  $n_1\in \mathbb{N}$ such that
\[
 I(\tilde{w}+n_1)-\big(m+\alpha T\big)\leq \eps_1 \quad  \text{ and } \quad    I(\tilde{w}-n_1)-\big(m+\alpha T\big)  \leq \eps_1.
\]
By \rife{condpm}, we deduce
\[
I(\tilde{w}\pm n_1)\leq \eps_1+(m+\alpha T)<\beta.
\]

Let us call 
\[
c=\inf_{\gamma\in\Gamma}\max_{x\in[0,1]} I(\gamma(x)),
\]
where $\Gamma=\{\gamma\in C([0,1],W_{\#}^{1,\infty} (0,T)): \gamma(0)=\tilde{w}-n_1, \ \gamma(1)=\tilde{w}+n_1\}$.
Since $c\geq\beta>m+\alpha T$ and by Step 1,  condition $(PS)_c$ holds and 
Theorem \ref{saddle} guarantees that $c$ is a  critical value of $I$. Consequenly there exists a function $u$ which is a critical point of $I$ of mountain pass nature.
%
\end{proof}\bigskip

In order to complete the proof of Theorem \ref{new}, we need the following result about the critical points of the functional $ I$.
\begin{theorem}\label{strongexist}
\emph{
If we assume hypotheses $(H_\Phi)$, $(H_h)$ and $f$ is an essentially bounded function in $(0,T)\times\mathbb{R}$, then every critical point of ${I}$  is a solution of \rife{simplenew}.}
\end{theorem}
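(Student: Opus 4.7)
Let $u\in K$ be a critical point of $I$, and set $g(t):=f(t,u(t))+h(t)$; since $f$ is essentially bounded on $(0,T)\times\mathbb{R}$ and $h$ is continuous, $g\in L^\infty(0,T)$. The critical point inequality reads
\[
\int_0^T[\Phi(v')-\Phi(u')]\,dt+\int_0^T g(t)(v(t)-u(t))\,dt\geq 0\qquad \forall\,v\in K.
\]
My plan mirrors the Brezis--Mawhin argument of \cite{due}. First I test with $v=u\pm c$, $c\in\mathbb{R}$, which is admissible since $v'=u'$, and deduce $\int_0^T g\,dt=0$. Therefore $G(t):=\int_0^t g(s)\,ds$ is absolutely continuous, $T$-periodic, and satisfies $G'=g$ a.e. An integration by parts (whose boundary terms vanish by periodicity of $G$, $u$, $v$) rewrites the critical point inequality as
\[
\int_0^T[\Phi(v'(t))-G(t)v'(t)]\,dt\geq\int_0^T[\Phi(u'(t))-G(t)u'(t)]\,dt \qquad\forall\, v\in K.
\]

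The next step is to read this as a convex minimization for $u'$. Extending $\Phi$ by $+\infty$ outside $[-a,a]$, the functional $\mathcal{E}(w):=\int_0^T[\Phi(w)-Gw]\,dt$ is convex, proper, and lower semicontinuous on $L^\infty(0,T)$, and the inequality above says that $u'$ minimizes $\mathcal{E}$ subject to the single linear constraint $\int_0^T w\,dt=0$. Since $w\equiv 0$ is admissible and lies in the interior of the effective domain of $\mathcal{E}$, a Lagrange multiplier $\lambda\in\mathbb{R}$ exists such that $u'$ is an unconstrained minimizer of $w\mapsto\int_0^T[\Phi(w)-(G+\lambda)w]\,dt$. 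The integrand depending only on $w(t)$ pointwise, this is equivalent to
\[
u'(t)\in\arg\min_{s\in[-a,a]}\{\Phi(s)-(G(t)+\lambda)s\}\qquad\text{for a.e.\ }t\in(0,T).
\]

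Now I analyze this pointwise problem. For any real $c$, the function $s\mapsto\Phi(s)-cs$ is continuous on $[-a,a]$, strictly convex on $(-a,a)$, and has derivative $\phi(s)-c$ ranging over all of $\mathbb{R}$ as $s$ ranges over $(-a,a)$; it therefore attains its unique minimum at the interior point $\phi^{-1}(c)$. Applied with $c=G(t)+\lambda$, which is continuous and hence bounded on $[0,T]$, this gives $u'(t)=\phi^{-1}(G(t)+\lambda)$ a.e.\ Correcting $u'$ on a null set one obtains $u'\in C[0,T]$ with $\|u'\|_\infty<a$, so $u\in C^1[0,T]$. The identity $\phi(u'(t))=G(t)+\lambda$ then shows that $\phi\circ u'$ is absolutely continuous with $(\phi\circ u')'(t)=g(t)=f(t,u(t))+h(t)$ a.e., and $u'(0)=u'(T)$ follows from $G(0)=G(T)=0$. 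This is precisely the notion of solution of \rife{simplenew} given in Definition \ref{defsol}.

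The main obstacle is the Lagrange multiplier step, i.e.\ passing from the variational inequality over the constrained set $K$ to a pointwise Euler--Lagrange condition. The clean route is to invoke the multiplier rule for convex programs with a qualifying Slater point; to avoid abstract convex duality one must construct admissible variations $v=u+\varepsilon\psi$ directly, which is delicate because feasibility $\|v'\|_\infty\le a$ fails in general and one has to cut off the perturbation on the set where $|u'|$ is close to $a$ and compensate the mean to keep $v$ periodic, before dividing by $\varepsilon$ and letting $\varepsilon\to 0$. Once the pointwise identity $\phi(u')=G+\lambda$ is secured, the regularity of $u$ and the validity of the ODE are automatic.
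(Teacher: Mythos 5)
Your proof is correct, but it takes a genuinely different route from the paper. The paper's proof is essentially a citation: it adds and subtracts $u$ to write the critical-point inequality as $\int_0^T[\Phi(v')-\Phi(u')+(l_u+u)(v-u)]\,dt\ge 0$ with $l_u=f(t,u)+h-u$, observes (via Corollary 3 of \cite{gradotop} and the argument of \cite{due}) that this is exactly the variational inequality characterizing the \emph{unique} solution of the auxiliary problem $(\phi(w'))'=w+l_u$ with periodic conditions, and concludes by uniqueness that $u$ is that solution, inheriting all the regularity. You instead prove the statement from scratch: test with constants to get $\int_0^T g=0$, integrate by parts against $G(t)=\int_0^t g$, recognize that $u'$ minimizes the convex functional $w\mapsto\int_0^T[\Phi(w)-Gw]$ under the single linear constraint $\int_0^T w=0$, and extract the first integral $\phi(u')=G+\lambda$. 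The trade-off is clear: the paper's argument is two lines but opaque (everything is outsourced to the monotone perturbation $w\mapsto w+l$), while yours is self-contained and exhibits the solution explicitly as $u'=\phi^{-1}(G+\lambda)$. The one step you flag as delicate --- the Lagrange multiplier --- is indeed the crux, and your Slater-point justification is legitimate since the constraint has one-dimensional range and $0$ is interior to the effective domain; but it can be closed in a completely elementary way, with no cut-off construction and no abstract duality: choose $\lambda$ by the intermediate value theorem so that $w_\lambda:=\phi^{-1}(G+\lambda)$ satisfies $\int_0^T w_\lambda=0$ (the map $\lambda\mapsto\int_0^T\phi^{-1}(G+\lambda)$ is continuous, strictly increasing, with range $(-aT,aT)$), and then use the convexity inequality $\Phi(w)-\Phi(w_\lambda)\ge \phi(w_\lambda)(w-w_\lambda)=(G+\lambda)(w-w_\lambda)$ to see that $w_\lambda$ minimizes the constrained problem and, by strict convexity of $\Phi$ on $(-a,a)$ together with the blow-up of $\phi$ at $\pm a$, that the minimizer is unique; hence $u'=w_\lambda$ a.e. With that substitution your argument is complete and fully rigorous.
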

\begin{proof}
The proof follows the same strategy which is used in \cite{due}. 
Let us suppose that $u$ is a critical point of ${I}$. Then, according to Szulkin's theory, it satisfies
\begin{equation*}\label{ineqI}
\int_0^T\big[\Phi(v')-\Phi(u')+\big(f(t,u)+h(t)\big)(v-u)\big]dt\geq0,\quad\forall v\in K.
\end{equation*}
In order to apply Corollary $3$ in \cite{gradotop}, we rewrite this inequality as 
\begin{equation*}\label{ineqII}
\int_0^T\big[\Phi(v')-\Phi(u')+\big(l_u(t)+u\big)(v-u)\big]dt\geq0,\quad\forall v\in K.
\end{equation*}
where
\[
l_u(t)=f(t,u)+h(t)-u.
\]
According to \cite{due}, it is the variational inequality solved by the unique solution of problem
\begin{equation*}
\begin{cases} \label{ult}
(\phi(u'(t)))'=u(t)+l_u(t),\quad \text{ in } (0,T);\\
u(0)-u(T)=0=u'(0)-u'(T).
\end{cases}
\end{equation*}
We know then, that $u$ is the solution of the above problem and that
$u \in C^1(0,T)$, $ \phi\circ u' $ is absolutely continuous and $\Vert u \Vert _\infty < a$.
Moreover it holds
$
(\phi(u'))'=u+l_{{u}}(t)=f(t,{u})+h(t)
$ 
 and in conclusion, the critical point $u$ is a solution of \rife{simplenew}.

\end{proof}

\begin{proof}[Proof of Theorem \ref{new}] Because of Theorem \ref{newcp}, we know that at least a critical point of $I$ exists. By Theorem \ref{strongexist}, it is guaranteed that such a point is a solution of \rife{simplenew}.
\end{proof}

{
\begin{proof}[Proof of Corollary \ref{cor1}]
Let us note that if \rife{condmin2} is satisfied, then 
\[
I(\tilde{w}+\bar{v}_0)=\int_0^T[\Phi(\tilde{w}')+h(t)\tilde{w}]dt+\int_0^TF(t,\bar{v}_0+\tilde{w})dt\leq m+\alpha T,
\]
which implies that $\dys \inf_{v\in K} I(v)\leq m+\alpha T$. Consequently, arguing as in  Step 2 of the proof of Theorem \ref{new},  there exists a solution by minimization.
\end{proof}

}

\begin{proof}[Proof of Corollary \ref{cor2}]
Recall that $\dys F_0=\inf_{(t,s)\in[0,T]\times \mathbb{R}}F(t,s)$;
since $\Phi(s)\geq ks^2$, it holds 
\begin{eqnarray*}
I(\tilde{v})&=&\int_0^T [\Phi(\tilde{v}')+h(t)\tilde{v}+F(t,\tilde{v})]\\
&\geq&k\Vert \tilde{v}'\Vert^2_{L^2}-\Vert h\Vert_\infty\int_0^T\vert \tilde{v}\vert dt +F_0T,
\end{eqnarray*}
 for any $\tilde{v}\in W$. Notice that in $W$ Poincar\'e-Wirtinger inequality holds, i.e. $\int_0^T\tilde{v}^2dt\leq\int_0^T\tilde{v}'^2dt$. Thus, thanks also to H\"older inequality,  we deduce
\[
I(\tilde{v})\geq k\Vert \tilde{v}'\Vert^2_{L^2} -\sqrt{T}\Vert h\Vert_\infty\Vert \tilde{v}'\Vert_{L^2}+F_0T=p(\Vert \tilde{v}'\Vert_{L^2}),
\]
 where $p$ is the polynomial defined by $\dys p(s):=ks^2-\sqrt{T}\Vert h\Vert_\infty s+F_0T$. It is easy to see that $p$ attains its infimum value $\dys-\frac{T\Vert h\Vert^2_\infty}{4k}+F_0Ts=\frac{\sqrt{T}\Vert h\Vert_\infty}{2k}$, thus we deduce 
\[
\min_W I\geq -\frac{T\Vert h\Vert^2_\infty}{4k}+F_0T.
\]
Hence, if  \rife{condpm2} holds true, then 
\[
\min_{\tilde{v}\in W}I(\tilde{v})>m+\alpha T,
\] 
i.e. \rife{condpm} is verified. By Step 3 of the proof of Theorem \ref{new}, the existence of a mountain pass is guaranteed.
\end{proof}

\begin{ack}
The author would like to thank David Arcoya and Tommaso Leonori, without their constant and patient supervision this work would not have been possible.
\end{ack}


\end{document}